\newtheorem{theorem}{Theorem}[section]
\begin{document}

\title[Embeddings in the Fell and Wijsman topologies]{Embeddings in the Fell and Wijsman topologies}
\author[\v Lubica Hol\'a ]{\v Lubica Hol\'a}

\newcommand{\acr}{\newline\indent}

\address{\llap{*}Academy of Sciences, Institute of Mathematics \acr \v
Stef\'anikova 49,
81473 Bratislava, Slovakia
\acr Slovakia}

\email{hola@mat.savba.sk}

\thanks{}

\subjclass[2010]{Primary 54B20; Secondary 54B05.}
\keywords{embedding, hyperspace, Fell topology, Wijsman topology,  $\omega_1$.
The author  would like to thank to grant APVV-0269-11 and  Vega 2/0018/13.}

\begin{abstract}

It is shown that if a $T_2$ topological space $X$ contains a closed uncountable discrete subspace, then  the spaces $(\omega_1 + 1)^{\omega}$ and  $(\omega_1 + 1)^{\omega_1}$ embed  into $(CL(X),\tau_F)$, the hyperspace of nonempty closed subsets of $X$ equipped  with the Fell topology. If $(X,d)$ is a non-separable perfect topological space, then $(\omega_1 + 1)^{\omega}$ and $(\omega_1 + 1)^{\omega_1}$ embed into $(CL(X),\tau_{w(d)})$, the hyperspace of nonempty closed subsets of $X$ equipped with the Wijsman topology, giving a partial answer to the Question 3.4 in [CJ].

\end{abstract}

\maketitle

\section{Introduction}
\bigskip
\bigskip
\bigskip

Throughout this paper, let $2^X$ ($CL(X)$) denote the family of all (nonempty) closed subsets of a given $T_2$ topological space. For $M \in CL(X)$, put

\bigskip
\centerline{$M^- = \{A \in CL(X): A \cap M \ne \emptyset\}$, $M^+ = \{A \in CL(X): A \subseteq M\}$}

\bigskip
and denote $M^c = X \setminus M$. The Vietoris topology [Mi] $\tau_V$ on $CL(X)$ has as subbase elements of the form $U^-$, $V^+$, where $U, V$ are open in $X$.

\bigskip
The Fell topology [Fe] $\tau_F$ on $CL(X)$ has as a subbase the collection

\bigskip
\centerline{$\{U^-: U$ open in $X\} \cup \{(K^c)^+: K$ compact in $X\}$.}

\bigskip
It si known [Be] that $(CL(X),\tau_F)$ is Hausdorff (regular, Tychonoff, respectively) iff $X$ is locally compact.
The normality problem of the Fell hyperspace was settled by Hol\'a, Levi and Pelant in [HLP], where they showed that $(CL(X),\tau_F)$ is normal if and only if $X$ is locally compact and Lindel\"{o}f.

\bigskip

For a metric space $(X,d)$, let $d(x,A) =$ inf$\{d(x,a): a \in A\}$ denote the distance between a point $x \in X$ and a nonempty subset $A$ of $(X,d)$.

A net $\{A_\alpha: \alpha \in \lambda\}$ in $CL(X)$ is said to be Wijsman convergent to some $A$ in $CL(X)$ if $d(x,A_\alpha)\rightarrow d(x,A)$ for every $x \in X$. The Wijsman topology on $CL(X)$ induced by $d$, denoted by $\tau_{w(d)}$, is the weakest topology such that for every $x \in X$, the distance functional

\bigskip
\centerline{$d(x,.): CL(X)  \to R^+$}
\bigskip
is continuous. It can be seen easily that the Wijsman topology on $CL(X)$ induced by $d$ has the family

\bigskip
\centerline{$\{U^-: U$ open in $X\} \cup \{\{A \in CL(X): d(x,A) > \epsilon\}: x \in X, \epsilon > 0\}$}

\bigskip
as a subbase [Be].

The above type of convergence was introduced by Wijsman in [Wi] for sequences of closed convex sets in Euclidean space $R^n$, when he considered optimum properties of the sequential probability ratio test.

It is  known [LL] that if $(X,d)$ is a metric space, then $(CL(X),\tau_{w(d)})$ is metrizable if and only if $(X,d)$ is separable.

One of the fundamental problems in the study of the Wijsman topology is to determine when it is normal. The problem was first mentioned by Di Maio and Meccariello in [DM], where it was asked whether the normality of the Wijsman topology is equivalent to its metrizability. Partial solutions to this problem were found by Hol\'a and Novotn\'y in [HN] and by Cao and Junnila in [CJ]. Hol\'a and Novotn\'y in [HN] showed that for a normed linear space $(X,d)$,  $(CL(X),\tau_{w(d)})$  is normal iff it is metrizable.  Cao and Junnila in [CJ] proved that a Wijsman hyperspace is hereditarily normal if and only if it is metrizable.

\bigskip
\bigskip
\bigskip

\section{Main result}

\bigskip

In [CJ]  the following Question 3.4 is posed.
\bigskip

{\bf Question 3.4. ([CJ])} Let $(X,d)$ be a non-separable metric space. Does $(CL(X),\tau_{w(d)})$ contain a copy of $(\omega_1 +1)^{\omega}$ or $(\omega_1 + 1)^{\omega_1}$?

\bigskip
\begin{theorem} Let $X$ be a $T_2$ topological space which contains an uncountable closed discrete set. $(CL(X),\tau_F)$ contains a copy of $(\omega_1 + 1)^{\omega}$ and $(\omega_1 + 1)^{\omega_1}$.

\end{theorem}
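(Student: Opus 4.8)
The plan is to realize both cubes inside one very concrete subspace of $(CL(X),\tau_F)$, namely the family of all nonempty subsets of the closed discrete set. Let $D=\{d_\alpha:\alpha<\omega_1\}$ be an injective enumeration of $\omega_1$ many points of the given uncountable closed discrete set, together with one further point $d_\ast$ of that set distinct from all $d_\alpha$ (possible since the set is uncountable). I would first record two elementary facts coming from $D$ being closed and discrete in the $T_2$ space $X$: every subset $S\subseteq D$ is closed in $X$ (its $X$-closure lies in $\overline D=D$ and meets $D$ in $\overline S{}^{\,D}=S$), so that $P(D):=\{S:\emptyset\ne S\subseteq D\}\subseteq CL(X)$; and every compact $K\subseteq X$ meets $D$ in a finite set (since $K\cap D$ is closed in $K$, hence compact, and discrete, hence finite).

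The crux is to identify the subspace topology that $\tau_F$ induces on $P(D)$. Writing each element of $P(D)$ as its characteristic function, I claim this subspace is exactly the Cantor cube $\{0,1\}^{D}$ with each factor discrete, i.e. the ordinary product topology. Intersecting the Fell subbase with $P(D)$: for open $U$ one has $U^-\cap P(D)=\{S:S\cap(U\cap D)\ne\emptyset\}$, and since $D$ is discrete $U\cap D$ runs over all subsets $T$ of $D$, giving all hitting sets $\{S:S\cap T\ne\emptyset\}$; for compact $K$ one has $(K^c)^+\cap P(D)=\{S:S\cap(K\cap D)=\emptyset\}$ with $K\cap D$ finite, and conversely every finite $F\subseteq D$ is compact, so one gets exactly the missing conditions $\{S:S\cap F=\emptyset\}$ for finite $F$. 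Taking $T$ and $F$ to be singletons shows that both $\{S:d\in S\}$ and $\{S:d\notin S\}$ are subbasic, so each coordinate projection $P(D)\to\{0,1\}$ is continuous and the two topologies share a subbase. This identification, in which the compactness in the Fell subbase forces the missing conditions to see only finite sets, is the step I expect to be the main obstacle to state cleanly.

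Next I would build the embedding of $(\omega_1+1)^{\omega_1}$. Partition $\{\alpha<\omega_1\}$ into $\omega_1$ blocks $B_\xi=\{b_\xi(\eta):\eta<\omega_1\}$, each of size $\omega_1$, and for $f\in(\omega_1+1)^{\omega_1}$ set
\[
\Phi(f)=\{d_\ast\}\cup\bigcup_{\xi<\omega_1}\{d_{b_\xi(\eta)}:\eta\ge f(\xi)\}\in CL(X),
\]
a nonempty subset of $D$, the anchor $d_\ast$ ensuring nonemptiness even when some $f(\xi)=\omega_1$. Under the identification above the $(\xi,\eta)$ coordinate of $\Phi(f)$ equals $1$ iff $f(\xi)\le\eta$, so $\Phi$ factors (up to the constant anchor coordinate) as the product over $\xi$ of the single map $h\colon\omega_1+1\to\{0,1\}^{\omega_1}$, $h(\theta)=\chi_{[\theta,\omega_1)}$. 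It then remains to check that $h$ is an embedding: its $\eta$-th coordinate map $\theta\mapsto[\theta\le\eta]$ has the clopen preimages $[0,\eta]$ and $(\eta,\omega_1]$, and these sets form a subbase for the order topology on $\omega_1+1$, so that topology is precisely the initial topology of $h$. Since a product of embeddings is an embedding and $\{0,1\}^{D}$ carries the product topology, $\Phi$ is a homeomorphism of $(\omega_1+1)^{\omega_1}$ onto its image in $(CL(X),\tau_F)$.

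Finally, $(\omega_1+1)^{\omega}$ embeds into $(\omega_1+1)^{\omega_1}$ as the subspace obtained by fixing all but countably many coordinates equal to the constant $\omega_1$, so the copy of $(\omega_1+1)^{\omega_1}$ just produced automatically contains a copy of $(\omega_1+1)^{\omega}$, settling both assertions at once; alternatively one runs the block construction with only countably many blocks to obtain $(\omega_1+1)^{\omega}$ directly.
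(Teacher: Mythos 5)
Your proof is correct. The embedding itself is essentially the map the paper uses --- send $f$ to an anchor point together with the union over $\xi$ of the tails $\{d_{b_\xi(\eta)}:\eta\ge f(\xi)\}$ of $\omega_1$ disjoint $\omega_1$-sized blocks of the discrete set --- but your verification takes a genuinely different route. The paper fixes an anchor $l$ outside $D$ and isolating neighbourhoods $U(x)$, checks by hand that the image is Hausdorff in $\tau_F$ and that preimages of the subbasic sets $V^-$ and $(K^c)^+$ are open, and then invokes compactness of $(\omega_1+1)^{\omega_1}$ to upgrade the continuous bijection to a homeomorphism. You instead isolate a structural lemma: the trace of $\tau_F$ on the nonempty subsets of a closed discrete set $D$ is exactly the product topology of $\{0,1\}^{D}$, since $U^-$ traces to an arbitrary union of the sets $\{S:d\in S\}$ (every $T\subseteq D$ is of the form $U\cap D$ because $D$ is relatively discrete), $(K^c)^+$ traces to a finite intersection of the sets $\{S:d\notin S\}$ (a compact set meets a closed discrete set finitely), and singletons give the reverse inclusions. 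The theorem then reduces to the elementary facts that $\theta\mapsto\chi_{[\theta,\omega_1)}$ is an embedding of $\omega_1+1$ into $\{0,1\}^{\omega_1}$ and that a product of embeddings is an embedding. This buys a reusable description of the Fell hyperspace over a closed discrete set, eliminates both the ad hoc Hausdorffness check and the appeal to compactness of the domain, and makes the $(\omega_1+1)^{\omega}$ case a literal subspace of the $(\omega_1+1)^{\omega_1}$ case rather than an argument that is merely ``similar''; the paper's version is more self-contained at the level of the specific map. (One small presentational point: decide explicitly whether $d_\ast$ belongs to the set $D$ on which you run the Cantor-cube identification --- it should, so that $\Phi(f)$ lands in $P(D)$ --- but this is cosmetic, since any subset of a closed discrete set is again closed and discrete.)
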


\begin{proof} We will prove the Theorem for the case of $(\omega_1 + 1)^{\omega_1}$, the other case is similar.  Let $D$ be a closed uncountable discrete set in $X$ such that $\mid D \mid = \aleph_1$.  We express $D$ as a  pairwise disjoint union of closed discrete sets  $\{D_i: i \in \omega_1\}$,

\bigskip
\centerline{$D = \bigcup_{i \in \omega_1} D_i$,}

\bigskip
such that $\mid D_i \mid = \aleph_1$ for every $i \in \omega_1$. Without loss of generality we can suppose that $X \ne D$
and let $l \in X \setminus D$. For each $x \in D$ fix an open neighbourhood $U(x)$ of $x$ such that $U(x) \cap (D \cup \{l\}) = \{x\}$.

For every $i \in \omega_1$ enumerate $D_i = \{x_{\alpha}^i: 0 < \alpha \le \omega_1\}$ and define a map

\bigskip
\centerline{$\varphi_i: \omega_1 + 1 \to 2^X$ }

\bigskip

as follows: $\varphi_i(0) = \emptyset$, $\varphi_i(\alpha) = \{x_{\eta}^i: \alpha \le \eta\}$ if $0 < \alpha \le \omega_1$.  Let $(\alpha_i)_{i \in \omega_1}$ be a point from $(\omega_1 + 1)^{\omega_1}$ its $i$th coordinate is $\alpha_i$. Now we will define a map

\bigskip
\centerline{$\phi: (\omega_1 + 1)^{\omega_1} \to CL(X)$}
\bigskip

as follows: $\phi((\alpha_i)_{i \in \omega_1}) = \{l\}$ if $\alpha_i = 0$ for every $i \in \omega_1$ and

\bigskip

\centerline{$\phi((\alpha_i)_{i \in \omega_1}) =  \{l\} \cup \bigcup_{i \in \omega_1}\varphi_i(\alpha_i)$}

\bigskip
if there is $i \in \omega_1$ such that $\alpha_i \ne 0$.

If we take distinct $A, B \in \phi((\omega_1 + 1)^{\omega_1})$, then there is $x \in D$ with $U(x)$ missing one of $A, B$ and hitting the other. So $U(x)^- \cap \phi((\omega_1 + 1)^{\omega_1})$ and $(X \setminus \{x\})^+ \cap \phi((\omega_1 + 1)^{\omega_1})$ are disjoint $\phi((\omega_1 + 1)^{\omega_1})$ neighbourhoods of $A, B$.

Consequently,  $(\phi((\omega_1 + 1)^{\omega_1}),\tau_F)$ is a Hausdorff space, so to show that $\phi: (\omega_1 + 1)^{\omega_1} \to  (\phi((\omega_1 + 1)^{\omega_1},\tau_F)$ is a homeomorphism it suffices to show that $\phi$ is continuous, since $(\omega_1 + 1)^{\omega_1}$ is compact and $\phi$ is one-to-one.

To prove the continuity of $\phi$ we first show that $\phi^{-1}(V^-)$ is open in $(\omega_1 + 1)^{\omega_1}$ for each open subset $V$ of $X$. So let $V \subseteq X$ be open. If $l \in V$ then $\phi^{-1}(V^-) = (\omega_1 + 1)^{\omega_1}$. Suppose $l \notin V$. Let $(\alpha_i)_{i \in \omega_1} \in \phi^{-1}(V^-)$. Thus $\phi((\alpha_i)_{i \in \omega_1}) \cap V \ne \emptyset$. There is $i \in \omega_1$ such that $\alpha_i \ne 0$ and  $\varphi_i(\alpha_i) \cap V \ne \emptyset$. It is easy to verify that $(0,\alpha_i]$ is an open neighbourhood of $\alpha_i$ such that $\varphi_i (\eta) \cap V \ne \emptyset$ for every $\eta \in (0,\alpha_i]$. Thus $\Pi_{j \in \omega_1} X_j$, where $X_i = (0,\alpha_i]$ and $X_j = \omega_1 + 1$ for $j \ne i$ is a neighbourhood of $(\alpha_i)_{i \in \omega_1}$ contained in $\phi^{-1}(V^-)$.

Now let $K$ be a compact set in $X$. We show that $\phi^{-1}((K^c)^+)$ is open in $(\omega_1 + 1)^{\omega_1}$. Let $(\alpha_i)_{i \in \omega_1} \in \phi^{-1}((K^c)^+)$. There is a finite set $J \subset \omega_1$ such that $K \cap D_j \ne \emptyset$ for every $j \in J$. For every $j \in J$, $\varphi_j(\alpha_j) \cap K = \emptyset$, thus there is an open neighbourhood $O(\alpha_j)$ of $\alpha_j$ such that $\varphi_j(\eta) \cap K = \emptyset$ for every $\eta \in O(\alpha_j)$. For every $i \in \omega_1$ put $X_i = O(\alpha_i)$ if $i \in J$ and $X_i = \omega_1 + 1$ otherwise. Then $\Pi_{i \in \omega_1} X_i$ is a neighbourhood of $(\alpha_i)_{i \in \omega_1}$ contained in $\phi^{-1}((K^c)^+)$.

\end{proof}

\bigskip
\bigskip
Notice that in the paper [HL] we proved that if  $X$ is a  $T_2$ topological space which contains an uncountable closed discrete set, then $\omega_1 \times (\omega_1 + 1)$ embeds into $(CL(X),\tau_F)$ as a closed set. It was shown in [HZ] that if  $X$ is a  $T_2$ topological space which contains an uncountable closed discrete set, then also Tychonoff plank embeds into $(CL(X),\tau_F)$ as a closed subspace.

\bigskip
\bigskip
\section{Concerning Question 3.4 in [CJ]}

\bigskip

In this part we will give a partial answer to the Question 3.4 in [CJ].
\bigskip

It is known [Be] that if a metric space $(X,d)$ has nice closed balls, then the Fell topology $\tau_F$ and the Wijsman topology $\tau_{w(d)}$ on $CL(X)$ coincide.

A metric space $(X,d)$ is said to have nice closed balls [Be] provided whenever $B$ is a closed ball in $X$ that is a proper subset of $X$, then $B$ is compact. The class of metric spaces that have nice closed balls includes those metric spaces in which closed and bounded sets are compact, as well as all $0-1$ metric spaces [Be].

\bigskip

\bigskip
\begin{theorem} Let $(X,d)$ be a non-separable metric space with nice closed balls. Then $(CL(X),\tau_{w(d)})$  contains a copy of $(\omega_1 + 1)^{\omega}$ and $(\omega_1 + 1)^{\omega_1}$.

\end{theorem}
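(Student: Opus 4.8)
The plan is to reduce the statement to Theorem 2.1, exploiting the coincidence of the Fell and Wijsman topologies under the nice-closed-balls hypothesis. Since $(X,d)$ has nice closed balls, the cited result of Beer gives $\tau_F = \tau_{w(d)}$ on $CL(X)$, so $(CL(X),\tau_{w(d)})$ and $(CL(X),\tau_F)$ are literally the same topological space. Hence it suffices to produce, inside $(CL(X),\tau_F)$, copies of $(\omega_1+1)^{\omega}$ and $(\omega_1+1)^{\omega_1}$, and Theorem 2.1 does exactly this, provided $X$ contains an uncountable closed discrete subset.

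Thus the entire argument comes down to the following metric fact: every non-separable metric space $(X,d)$ contains a closed discrete subspace of cardinality $\aleph_1$. To prove it I would first establish that non-separability yields, for some $\epsilon > 0$, an uncountable $\epsilon$-separated set, i.e.\ a set $A$ with $d(x,y) \ge \epsilon$ for all distinct $x,y \in A$. The contrapositive is the clean route: if for every $n$ every $(1/n)$-separated set is countable, then choosing (by Zorn's Lemma) a maximal $(1/n)$-separated set $A_n$ for each $n$, maximality forces each $A_n$ to be a $(1/n)$-net, so $\bigcup_n A_n$ is a countable dense subset and $X$ is separable. Contrapositively, non-separability produces the desired uncountable $\epsilon$-separated set $A$; passing to a subset, I may assume $\mid A \mid = \aleph_1$.

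It then remains to observe that such an $A$ is both discrete and closed in $X$. Discreteness is immediate, since the ball $B(a,\epsilon)$ meets $A$ only in $a$ for each $a \in A$. For closedness, any point of $\overline{A}$ is the limit of a sequence from $A$ whose terms are eventually within $\epsilon$ of one another, which forces the sequence to be eventually constant, so its limit lies in $A$; hence $A = \overline{A}$. With an uncountable closed discrete subset in hand, Theorem 2.1 applies to $X$ and, via the identification $\tau_F = \tau_{w(d)}$, delivers the required embeddings of $(\omega_1+1)^{\omega}$ and $(\omega_1+1)^{\omega_1}$ into $(CL(X),\tau_{w(d)})$.

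The only genuine content here is the metric lemma on uncountable closed discrete sets; everything else is bookkeeping, and I expect no real obstacle. One point worth checking explicitly is that the nice-closed-balls hypothesis is used solely to identify the two topologies and plays no role in extracting the discrete set — indeed, the construction of $A$ needs only non-separability, while nice closed balls enters only at the final invocation of $\tau_F = \tau_{w(d)}$.
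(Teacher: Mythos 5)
Your proposal is correct and follows exactly the route the paper intends: the preceding remark that nice closed balls force $\tau_F = \tau_{w(d)}$, combined with the standard fact (also used in the proof of Theorem 3.3) that a non-separable metric space contains an uncountable $\epsilon$-separated, hence closed discrete, subset, reduces the statement to Theorem 2.1. The paper leaves this argument implicit, but your filled-in details, including the maximal $(1/n)$-separated-set argument and the verification that an $\epsilon$-separated set is closed and discrete, are all sound.
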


The following theorem  gives a better  partial answer to the Question 3.4 in [CJ] than Theorem 3.1.

\begin{theorem} Let $(X,d)$ be a metric space such that every closed proper ball in $X$  is totally bounded. If $X$ is non-separable, then $(CL(X),\tau_{w(d)})$ contains a copy of $(\omega_1 + 1)^{\omega}$ and $(\omega_1 + 1)^{\omega_1}$.

\end{theorem}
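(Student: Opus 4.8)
The plan is to reuse the construction from the proof of Theorem 2.1 essentially verbatim and then verify continuity against the one extra family of subbasic Wijsman open sets. First I would extract the needed set from non-separability, but in a sharper form: in a metric space, if $X$ is non-separable then for some $\epsilon>0$ a maximal $\epsilon$-separated set (which exists by Zorn's Lemma) must be uncountable, for otherwise the countable union over $n$ of maximal $(1/n)$-separated sets would be dense. Fixing such an $\epsilon$ and passing to a subset, I obtain $D$ with $|D|=\aleph_1$ which is $\epsilon$-separated, i.e. $d(u,v)\ge\epsilon$ for distinct $u,v\in D$; such a set is automatically closed and discrete. I would then take the decomposition $D=\bigcup_{i\in\omega_1}D_i$, the neighbourhoods $U(x)$, the maps $\varphi_i$, and the map $\phi\colon(\omega_1+1)^{\omega_1}\to CL(X)$ exactly as in Theorem 2.1.

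The injectivity of $\phi$, the Hausdorffness of the image, and the openness of $\phi^{-1}(V^-)$ for open $V\subseteq X$ carry over word for word, since those arguments use only the sets $U(x)$ and $V^-$, which belong to both the Fell and the Wijsman subbase. As before, $(\omega_1+1)^{\omega_1}$ is compact and $\phi$ is a one-to-one map into a Hausdorff space, so it suffices to prove continuity; hence the only remaining point is to show that $\phi^{-1}(W(x,\epsilon))$ is open, where $W(x,\epsilon)=\{A\in CL(X):d(x,A)>\epsilon\}$ for $x\in X$ and $\epsilon>0$.

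For this crux step, let $(\alpha_i)_{i\in\omega_1}\in\phi^{-1}(W(x,\epsilon))$ and write $A=\phi((\alpha_i))$, so $d(x,A)>\epsilon$. I would pick $\rho$ with $\epsilon<\rho<d(x,A)$ and consider the closed ball $\overline{B}(x,\rho)=\{y\in X:d(x,y)\le\rho\}$. Since every point of the nonempty set $A$ lies at distance $>\rho$ from $x$, the ball $\overline{B}(x,\rho)$ is a proper subset of $X$, hence totally bounded by hypothesis. The decisive observation is that a totally bounded set meets the $\epsilon$-separated set $D$ in only finitely many points: covering $\overline{B}(x,\rho)$ by finitely many balls of radius $\epsilon/2$, each such ball contains at most one point of $D$. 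Thus $D\cap\overline{B}(x,\rho)$ is finite and meets only finitely many $D_i$; let $J\subset\omega_1$ be this finite index set. For $i\notin J$ the set $\varphi_i(\eta)$ is disjoint from $\overline{B}(x,\rho)$ for every $\eta$. For $i\in J$, the finitely many $\eta$ with $x_\eta^i\in\overline{B}(x,\rho)$ all satisfy $\eta<\alpha_i$ (because $\varphi_i(\alpha_i)\subseteq A$ is disjoint from $\overline{B}(x,\rho)$); letting $\gamma_i$ be their maximum, the open neighbourhood $O(\alpha_i)=(\gamma_i,\omega_1]$ of $\alpha_i$ satisfies $\varphi_i(\eta)\cap\overline{B}(x,\rho)=\emptyset$ for every $\eta\in O(\alpha_i)$. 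Setting $X_i=O(\alpha_i)$ for $i\in J$ and $X_i=\omega_1+1$ otherwise, for any $(\beta_i)\in\prod_i X_i$ the set $\phi((\beta_i))=\{l\}\cup\bigcup_i\varphi_i(\beta_i)$ is disjoint from $\overline{B}(x,\rho)$ (note $d(x,l)\ge d(x,A)>\rho$), whence $d(x,\phi((\beta_i)))\ge\rho>\epsilon$. So $\prod_i X_i$ is a neighbourhood of $(\alpha_i)$ inside $\phi^{-1}(W(x,\epsilon))$, and continuity follows; the case $(\omega_1+1)^{\omega}$ is identical with a countable decomposition $\{D_i:i\in\omega\}$.

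The main obstacle, and the only genuinely new ingredient beyond Theorem 2.1, is that the Wijsman subbase forces us to work with balls rather than compact sets: in the Fell proof the finiteness of $K\cap D$ was automatic from compactness, whereas here I must replace compactness by the combination of total boundedness of proper balls (the hypothesis) together with uniform discreteness of $D$. Arranging that $D$ be $\epsilon$-separated rather than merely closed and discrete is precisely what makes the finite-intersection argument succeed, which is why the stronger extraction of $D$ from non-separability is carried out at the very beginning.
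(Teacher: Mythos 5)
Your proof follows the paper's argument exactly: the same extraction of an $\epsilon$-discrete set $D$ of size $\aleph_1$ from non-separability, the same map $\phi$ carried over from Theorem 2.1, and the same key observation that a closed ball missing the nonempty set $\phi((\alpha_i^*)_{i})$ is a proper subset of $X$, hence totally bounded, hence meets the $\epsilon$-separated set $D$ in only finitely many points (the paper disposes of this in one line; you supply the details). One small patch is needed: for $i \in J$ with $\alpha_i = 0$ your neighbourhood $(\gamma_i,\omega_1]$ does not contain $\alpha_i$, since $\varphi_i(0)=\emptyset$ yields no bound $\eta<\alpha_i$ on the points of $D_i$ in the ball; take $O(\alpha_i)=\{0\}$ (or $\{0\}\cup(\gamma_i,\omega_1]$), which is open in $\omega_1+1$ because $0$ is isolated, and the rest of the argument goes through unchanged.
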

\begin{proof} We will prove the Theorem for the case of $(\omega_1 + 1)^{\omega_1}$, the other case is similar.  Since  $(X,d)$ is non-separable there exist $\epsilon > 0$ and a set $D \subset X$ with $\mid D \mid = \aleph_1$ which is $\epsilon$-discrete, that is,  $d(x,y) \ge \epsilon$ for all distinct $x, y \in D$. We express $D$ as a  pairwise disjoint union of closed discrete sets  $\{D_i: i \in \omega_1\}$,

\bigskip
\centerline{$D = \bigcup_{i \in \omega_1} D_i$,}

\bigskip
such that $\mid D_i \mid = \aleph_1$ for every $i \in \omega_1$. Without loss of generality we can suppose that $X \ne D$
and let $l \in X \setminus D$.

For every $i \in \omega_1$ enumerate $D_i = \{x_{\alpha}^i: 0 < \alpha \le \omega_1\}$ as in the proof of Theorem 2.1 and we will proceed as in the proof of Theorem 2.1. We claim that $\phi: (\omega_1 + 1)^{\omega_1}  \to (CL(X),\tau_{w(d)})$ is embedding. Of course, it is sufficient to prove that $\phi$ is continuous.

It is sufficient to verify that if $d(x,\phi((\alpha_i^*)_{i \in \omega_1}))) > r$ for some $x \in X$ and $r > 0$, then
$d(x,\phi((\alpha_i)_{i \in \omega_1}))) > r$ for all $(\alpha_i)_{i \in \omega_1}$ from a neighbourhood of $(\alpha_i^*)_{i \in \omega_1}$. However, it is clear, since the closed proper ball with center $x$ and the radius $r$ can contain only finitely many points of the set $D$.

\end{proof}

A topological space $X$ is perfect if there are no isolated points in $X$.

\begin{theorem} Let $(X,d)$ be a non-separable perfect metric space. Then $(\omega_1 + 1)^{\omega}$ and $(\omega_1 + 1)^{\omega_1}$ embed into $(CL(X),\tau_{w(d)})$.
\end{theorem}

\begin{proof} We will prove the Theorem for the case of $(\omega_1 + 1)^{\omega_1}$, the other case is similar. Since $(X,d)$ is non-separable, there exist $\epsilon > 0$ and a set $D \subset X$ with $\mid D \mid = \aleph_1$ which is $\epsilon$-discrete, that is,  $d(x,y) \ge \epsilon$ for all distinct $x, y \in D$. Without loss of generality we can suppose that all points of $D$ are non-isolated. For every $x \in D$ there is a point $t_x \in X$ such that $d(x,t_x) < \epsilon/10$. For every $x \in D$ put $\eta(x) = d(x,t_x)$. Set

\bigskip

\centerline{$H = X \setminus \bigcup_{x \in D} S(x,\eta(x))$,}

\bigskip
where $S(x,\eta(x)) = \{s \in X: d(x,s) < \eta(x)\}$. We express $D$ as a  pairwise disjoint union of closed discrete sets  $\{D_i: i \in \omega_1\}$,

\bigskip
\centerline{$D = \bigcup_{i \in \omega_1} D_i$,}

\bigskip
such that $\mid D_i \mid = \aleph_1$ for every $i \in \omega_1$. For every $i \in \omega_1$ enumerate $D_i = \{x_{\alpha}^i: 0 <  \alpha \le \omega_1\}$ and define a map

\bigskip
\centerline{$\varphi_i: \omega_1 + 1 \to 2^X$ }
\bigskip

as follows: $\varphi_i(0) = \emptyset$, $\varphi_i(\alpha) = \{x_{\eta}^i: \alpha \le \eta\}$ if $0 < \alpha \le \omega_1$.  Let $(\alpha_i)_{i \in \omega_1}$ be a point from $(\omega_1 + 1)^{\omega_1}$ its $i$th coordinate is $\alpha_i$. Now we will define a map

\bigskip
\centerline{$\phi: (\omega_1 + 1)^{\omega_1} \to CL(X)$}
\bigskip

as follows: $\phi((\alpha_i)_{i \in \omega_1}) =  H$ if $\alpha_i = 0$ for every $i \in \omega_1$ and
\bigskip

\centerline{$\phi((\alpha_i)_{i \in \omega_1}) = H \cup  \bigcup_{i \in \omega_1}\varphi_i(\alpha_i)$ }
\bigskip

if there is $i \in \omega_1$ such that $\alpha_i \ne 0$.

To show that $\phi: (\omega_1 + 1)^{\omega_1} \to  (\phi((\omega_1 + 1)^{\omega_1},\tau_{w(d)})$ is a homeomorphism it suffices to show that $\phi$ is continuous, since $(\omega_1 + 1)^{\omega_1}$ is compact,  $(\phi((\omega_1 + 1)^{\omega_1},\tau_{w(d)})$ is a Hausdorff space and $\phi$ is one-to-one.

To prove the continuity of $\phi$ we first show that $\phi^{-1}(V^-)$ is open in $(\omega_1 + 1)^{\omega_1}$ for each open subset $V$ of $X$. So let $V \subseteq X$ be open. If $H \cap  V \ne \emptyset$ then $\phi^{-1}(V^-) = (\omega_1 + 1)^{\omega_1}$. Suppose $H \cap V = \emptyset$.

 Let $(\alpha_i)_{i \in \omega_1} \in \phi^{-1}(V^-)$. Thus $\phi((\alpha_i)_{i \in \omega_1}) \cap V \ne \emptyset$. There is $i \in \omega_1$ such that $\alpha_i \ne 0$ and  $\varphi_i(\alpha_i) \cap V \ne \emptyset$. It is easy to verify that $(0,\alpha_i]$ is an open neighbourhood  of $\alpha_i$ such that $\varphi_i(\eta) \cap V \ne \emptyset$ for every $\eta \in (0,\alpha_i]$. Thus $\Pi_{j \in \omega_1} X_j$, where $X_i = (0,\alpha_i]$ and $X_j = \omega_1 + 1$ for $j \ne i$ is a neighbourhood of $(\alpha_i)_{i \in \omega_1}$ contained in $\phi^{-1}(V^-)$.

It is sufficient to verify that if $d(x,\phi((\alpha_i^*)_{i \in \omega_1}))) > r > 0$ for some $x \in X$, then
$d(x,\phi((\alpha_i)_{i \in \omega_1}))) > r$ for all $(\alpha_i)_{i \in \omega_1}$ from a neighbourhood of $(\alpha_i^*)_{i \in \omega_1}$.  Suppose first that $(\alpha_i^*)_{i \in \omega_1}$ is such that $\alpha_i^* = 0$ for all $i  \in \omega_1$. Thus $\phi((\alpha_i^*)_{i \in \omega_1})= H$.

If $d(x,H) > r > 0$, then there is $z \in D$ such that $x \in S(z,\eta(z))$. There is $i \in \omega_1$ such that $z \in D_i$. Thus $\Pi_{j \in \omega_1} X_j$, where $X_i = \{0\} $ and $X_j = \omega_1 + 1$ for $j \ne i$ is a neighbourhood of $(\alpha_i^*)_{i \in \omega_1}$ such that $d(x,\phi((\alpha_i)_{i \in \omega_1}))) > r$ for all $(\alpha_i)_{i \in \omega_1}$ from the neighbourhood.

Suppose now that $d(x,\phi((\alpha_i^*)_{i \in \omega_1}))) > r > 0$ for some $x \in X$ and  $(\alpha_i^*)_{i \in \omega_1}$  such that there is $j \in \omega_1$ with $\alpha_j^* \ne 0$. There is $z \in D$ with $x \in S(z,\eta(z))$. If $z \in \phi((\alpha_i^*)_{i \in \omega_1})$, then $(\omega_1 + 1)^{\omega_1}$ is a neighbourhood of $(\alpha_i^*)_{i \in \omega_1}$ such that $d(x,\phi((\alpha_i)_{i \in \omega_1}))) > r$ for every $(\alpha_i)_{i \in \omega_1}$ from the neighbourhood $(\omega_1 + 1)^{\omega_1}$.

Suppose that $z \notin \phi((\alpha_i^*)_{i \in \omega_1})$. There are $i \in \omega_1, \beta \in \omega_1$ such that $z  = x_{\beta}^i$. If $\alpha_i^* = 0$, then $O = \Pi_{j \in \omega_1} X_j$, where $X_i = \{0\}$ and $X_j = \omega_1 + 1$ for $j \ne i$ is a neighbourhood of $(\alpha_i^*)_{i \in \omega_1}$ such that $d(x,\phi((\alpha_i)_{i \in \omega_1}))) > r$ for every $(\alpha_i)_{i \in \omega_1}$ from the neighbourhood $O$. If $\alpha_i^* \ne 0$, then $O = \Pi_{j \in \omega_1} X_j$, where $X_i = (\beta,\alpha_i^*]$ and $X_j = \omega_1 + 1$ for $j \ne i$ is a neighbourhood of $(\alpha_i^*)_{i \in \omega_1}$ such that $d(x,\phi((\alpha_i)_{i \in \omega_1}))) > r$ for every $(\alpha_i)_{i \in \omega_1}$ from the neighbourhood $O$.

\end{proof}

\bigskip

A topological space $X$ is perfect if there are no isolated points in $X$.

\begin{theorem} Let $(X,d)$ be a non-separable perfect metric space. Then $(\omega_1 + 1)^{\omega}$ and $(\omega_1 + 1)^{\omega_1}$ embed into $(CL(X),\tau_{w(d)})$.
\end{theorem}

\vskip 1pc

\end{document}